\newcommand*\Cdot{\mathpalette\Cdot@{.5}}
\newcommand*\Cdot@[2]{\mathbin{\vcenter{\hbox{\scalebox{#2}{$\m@th#1\bullet$}}}}}
\DeclareMathAlphabet{\mathpzc}{OT1}{pzc}{m}{it}
\providecommand*{\twoheadrightarrowfill@}{%
  \arrowfill@\relbar\relbar\twoheadrightarrow
}
\providecommand*{\twoheadleftarrowfill@}{%
  \arrowfill@\twoheadleftarrow\relbar\relbar
}
\providecommand*{\xtwoheadrightarrow}[2][]{%
  \ext@arrow 0579\twoheadrightarrowfill@{#1}{#2}%
}
\providecommand*{\xtwoheadleftarrow}[2][]{%
  \ext@arrow 5097\twoheadleftarrowfill@{#1}{#2}%
}
\newcommand*{\relrelbarsep}{.386ex}
\newcommand*{\relrelbar}{%
  \mathrel{%
    \mathpalette\@relrelbar\relrelbarsep
  }%
}
\newcommand*{\@relrelbar}[2]{%
  \raise#2\hbox to 0pt{$\m@th#1\relbar$\hss}%
  \lower#2\hbox{$\m@th#1\relbar$}%
}
\providecommand*{\rightrightarrowsfill@}{%
  \arrowfill@\relrelbar\relrelbar\rightrightarrows
}
\providecommand*{\leftleftarrowsfill@}{%
  \arrowfill@\leftleftarrows\relrelbar\relrelbar
}
\providecommand*{\xrightrightarrows}[2][]{%
  \ext@arrow 0359\rightrightarrowsfill@{#1}{#2}%
}
\providecommand*{\xleftleftarrows}[2][]{%
  \ext@arrow 3095\leftleftarrowsfill@{#1}{#2}%
}
\newcommand{\colim@}[2]{%
  \vtop{\m@th\ialign{##\cr
    \hfil$#1\operator@font colim$\hfil\cr
    \noalign{\nointerlineskip\kern1.5\ex@}#2\cr
    \noalign{\nointerlineskip\kern-\ex@}\cr}}%
}
\newcommand{\colim}{%
  \mathop{\mathpalette\colim@{\rightarrowfill@\scriptscriptstyle}}\nmlimits@
}
\renewcommand{\varprojlim}{%
  \mathop{\mathpalette\varlim@{\leftarrowfill@\scriptscriptstyle}}\nmlimits@
}
\renewcommand{\varinjlim}{%
  \mathop{\mathpalette\varlim@{\rightarrowfill@\scriptscriptstyle}}\nmlimits@
}
\DeclareSymbolFont{cyrletters}{OT2}{wncyr}{m}{n}
\DeclareMathSymbol{\Sh}{\mathalpha}{cyrletters}{"58}
\newcommand*\bigcdot{\mathpalette\bigcdot@{.5}}
\newcommand*\bigcdot@[2]{\mathbin{\vcenter{\hbox{\scalebox{#2}{$\m@th#1\bullet$}}}}}
\tikzset{near start abs/.style={xshift=1cm}}
\DeclareSymbolFont{symbolsC}{U}{txsyc}{m}{n}
\DeclareMathSymbol{\Searrow}{\mathrel}{symbolsC}{117}
\DeclareSymbolFont{extraup}{U}{zavm}{m}{n}
\DeclareMathSymbol{\varheart}{\mathalpha}{extraup}{86}
\DeclareMathSymbol{\vardiamond}{\mathalpha}{extraup}{87}
\theoremstyle{definition}
\newtheorem{thm}{Theorem}[section]
\newtheorem{cor}{Corollary}[thm]
\newtheorem{lem}[thm]{Lemma}
\newtheorem{prop}[thm]{Proposition}
\theoremstyle{definition}
\newtheorem{definition}{Definition}[section]
\newcommand{\gd}{\delta}
\newcommand{\cC}{\CMcal{C}}
\newcommand{\cG}{\CMcal{G}}
\newcommand{\cB}{\CMcal{B}}
\newcommand{\CAT}{\operatorname{CAT}}
\newcommand{\la}{\langle}
\newcommand{\ra}{\rangle}
\newcommand{\wt}{\widetilde}
\definecolor{Red}{rgb}{0.8,0,0.2}
\newcommand{\GG}[1]{}
\def\@footnotecolor{red}
\def\@footnotemark{%
    \leavevmode
    \ifhmode\edef\@x@sf{\the\spacefactor}\nobreak\fi
    \stepcounter{Hfootnote}%
    \global\let\Hy@saved@currentHref\@currentHref
    \hyper@makecurrent{Hfootnote}%
    \global\let\Hy@footnote@currentHref\@currentHref
    \global\let\@currentHref\Hy@saved@currentHref
    \hyper@linkstart{footnote}{\Hy@footnote@currentHref}%
    \@makefnmark
    \hyper@linkend
    \ifhmode\spacefactor\@x@sf\fi
    \relax
  }%
\title[Stacky Buildings]{Stacky Buildings}
\author{William Norledge}
\address[William Norledge]{Pennsylvania State University}
\email{wxn39@psu.edu}
\begin{document}

\renewcommand{\chapterautorefname}{Chapter}
\renewcommand{\sectionautorefname}{Section}
\renewcommand{\subsectionautorefname}{Section}

\renewcommand{\chapterautorefname}{Chapter}
\renewcommand{\sectionautorefname}{Section}
\renewcommand{\subsectionautorefname}{Section}

\begin{abstract}
We introduce structures which model quotients of buildings by type-preserving group actions. These structures, which we call W-groupoids for W a Coxeter group, generalize Bruhat decompositions, chambers systems of type M, Tits amalgams, and buildings themselves. We define the fundamental group of a W-groupoid, and characterize buildings as connected simply connected W-groupoids. We give a brief outline of covering theory of W-groupoids, which produces buildings as universal covers equipped with an action of the fundamental group. The local-to-global theorem of Tits concerning spherical 3-resides allows for the construction of W-groupoids by amalgamating quotients of generalized polygons along groupoids. In this way, W-groupoids provide a powerful way to construct (lattices in) exotic, hyperbolic, and wild buildings. 
\end{abstract}

\maketitle

\setcounter{tocdepth}{1} 
\hypertarget{foo}{ }
\tableofcontents

\section*{Introduction} 

Conceptually, a building is a set $\Delta$ equipped with a metric 
\[
\delta:\Delta\times \Delta \to W
\] 
which takes values in a Coxeter group $W=(W,S)$, with the Bruhat order of $W$ being used for the triangle inequality. The elements of $\Delta$ are called chambers, isometric embeddings $W\hookrightarrow \Delta$ are called apartments, paths in $\Delta$ are called galleries, and `geodesics' in $\Delta$ are called galleries of reduced type. This modern $W$-metric approach to buildings is described in \cite[Chapter 5]{ab08}, together with the equivalent simplicial complex approach taken in the early work of Tits \cite{tits74buildings}, \cite[Chapter 4]{ab08}. 
Buildings are also naturally $\CAT(0)$ cell complexes via the Davis realization \cite{davis94buildingscat0}, \cite[Chapter 12]{ab08}. If $W$ is an irreducible affine Coxeter group, then the Davis realization is the simplicial complex of the building (up to duality).

By the quotient of a building, we mean a would-be structure naturally associated to the type-preserving action of a group on a building in the manner of stacks, e.g. orbifolds, graphs/complexes of groups, algebraic stacks. Moreover, one should be able to develop covering theory for quotients, which is an important tool in the study of group actions. By modeling certain quotients of buildings as chamber systems of type $M$, covering theory of buildings was developed by Tits \cite{tits81local}. See also \cite{kantorscabs}, \cite{ronanlectures}, \cite{ronan92mainideas}. However, this theory `stops' at $2$-residues, and so is restricted to groups which act freely on the set of $2$-residues of a building. On the other hand, quotient data in the form of a Bruhat decomposition is associated to groups which act transitively on chambers and transitively on $W$-spheres \cite[Chapter 5]{ab08}. More generally, Tits's amalgam method \cite{tits85standrews}, \cite{tits86} constructs quotients of buildings by groups which act transitively on chambers. In this paper, we introduce structures called $W$-groupoids which unify and generalize these existing quotient constructions; $W$-groupoids are the full stacky generalization of buildings. In \cite{nor2}, (presentations of) $W$-groupoids which are quotients of chamber-free actions are studied, and in \cite{nor3}, $W$-groupoids are used to construct and classify new examples of Singer lattices in buildings.

Alternatively, if one takes the Davis realization of a building, then the quotient of a building will naturally be a developable complex of groups (since complexes of groups are the stacky quotients of cell complexes, see \cite[Chapter~III.$\cC$]{bridsonhae}). Constructing (lattices in) buildings by taking universal covers of complexes of groups is a common technique in geometric group theory, e.g. \cite{bourdon97}, \cite{gaboriaupaulin}, \cite{futer2011surface}, \cite{essert2013geometric}, \cite{Norledge2017}. If one restricts to torsion-free lattices, then the corresponding complex of groups has trivial local groups by the Bruhat-Tits fixed point theorem, and so cell complexes are sufficient to model quotients, e.g. \cite{cartwright1993groups}, \cite{vdovina02}. Unlike these geometric constructions, $W$-groupoids make use of the ultimate combinatorial $W$-metric structure of buildings. Importantly, the construction of quotients by amalgamating quotients of generalized polygons (and the calculation of their fundamental groups) is then straightforward, see e.g. \cite{nor3}. 

\subsection*{Acknowledgments.} 

We thank Alina Vdovina for useful discussions. This research was partly supported by a grant from Templeton Religion Trust as part of the mathematical picture language project at Harvard University. We also thank Newcastle University for their support. 

\section{$W$-Groupoid Properties} \index{$W$-groupoid!properties}

Let $\cG=(\cG_0,\cG_1)$ be a small groupoid with vertices/chambers (objects) $\cG_0$ and edges (morphisms) $\cG_1$, let $W=(W,S)$ be a Coxeter group with generating set $S$, and let `$\leq$' denote the Bruhat order of $W$. For an edge $g\in \cG$, we denote the initial and terminal chambers of $g$ by $\iota(g)$ and $\tau(g)$ respectively. For edges $g,h\in \cG$ such that $\tau(g)=\iota(h)$, we write 
\[gh=g;h=h\circ g\] 
for their composition in $\cG$. We make the convention that by `a function' 
\[
\gd:\cG\to W
\] 
we mean any function of sets $\gd:\cG_1\to W$. We call $\gd(g)$ the \emph{$W$-length} of the edge $g$. We call $\cG$ \emph{simply connected} if all its local groups are trivial. Simply connected groupoids are equivalent to equivalence relations, and if $\cG$ is a connected simply connected groupoid, then a function $\gd:\cG\to W$ is equivalent to a function $\gd:\cG_0\times \cG_0\to W$ by identifying the unique edge from $C$ to $D$ with $(C,D)\in \cG_0\times \cG_0$. 

We now introduce a collection of metric space like properties which an arbitrary function $\gd:\cG\to W$ may satisfy.
\begin{enumerate}  [itemindent=0cm, leftmargin=1.9cm]
\item [\textbf{(WG1)}]
For all identity edges $1\in \cG$, we have  $\gd(1)=1$. 
\end{enumerate}
In particular, (WG1) allows a non-identity edge of $\cG$ to have a $W$-length of $1$. 
\begin{enumerate}  [itemindent=0cm, leftmargin=1.9cm] \index{$W$-groupoid!properties!triangle inequality}
\item [\textbf{(WG2)}]
For all edges $g,h\in \cG$ such that $gh$ is defined in $\cG$, we have
\[\delta(gh)\geq \delta(g)\delta(h).\]
\end{enumerate}
Schematically, we have
\begin{center}
	\begin{tikzpicture}
	
	\node (11) at (0,0) {};
	\node (21) at (-1,2) {};
	\node (22) at (1,2) {};

	\draw[fill=black] (11) circle [radius=0.06];
	\draw[fill=black] (21) circle [radius=0.06];
	\draw[fill=black] (22) circle [radius=0.06];

	\draw [black, ->, shorten >=0.2cm] (11.center) to (21.center);
	\draw [black, ->, shorten >=0.2cm] (11.center) to (22.center);
	\draw [black, ->, shorten >=0.2cm] (22.center) to (21.center);
	
	\node[xshift=-1.9cm] at (-0.5,1) {$\delta(gh)\geq \delta(g)\delta(h)$}; 
	\node[xshift=0.4cm] at (0.5,1) {$g$}; 
	\node[yshift=0.4cm] at (0,2) {$h$}; 
	
	\end{tikzpicture}
\end{center}
Property (WG2) will be called the \emph{triangle inequality}, for obvious reasons, although the direction of the inequality is opposite to what one might expect.
\begin{enumerate}  [itemindent=0cm, leftmargin=1.9cm] \index{$W$-groupoid!properties!local triangle inequality}
\item [\textbf{(WG2$'$)}]
For all edges $g,h\in \cG$ such that $g^{-1}h$ is defined in $\cG$ and $\gd(g^{-1}h)\in S$, putting $w=\gd(g)$ and $s=\gd(g^{-1}h)$, we have:
\begin{enumerate}[label=(\roman*)]
\item
if $ws<w$, then $\gd(h)\in \{w, ws\}$
\item
if $ws>w$, then $\gd(h)=ws$.
\end{enumerate}
\end{enumerate}
Schematically, we have
\begin{center}
	\begin{tikzpicture}
	\node (11) at (0,0) {};
	\node (21) at (-0.6,3) {};
	\node (22) at (0.6,3) {};
	
	\draw[fill=black] (11) circle [radius=0.06];
	\draw[fill=black] (21) circle [radius=0.06];
	\draw[fill=black] (22) circle [radius=0.06];
	
	\draw [black, ->, shorten >=0.2cm] (11.center) to (21.center);
	\draw [black, ->, shorten >=0.2cm] (11.center) to (22.center);
	\draw [black, ->, shorten >=0.2cm] (22.center) to (21.center);
	
	\node[xshift=-0.8cm] at (-0.5,1.5) {$\{w, ws\}$}; 
	\node[xshift=0.17cm] at (0.5,1.5) {$w$}; 
	\node[yshift=0.3cm] at (0,3) {$s$}; 
	\end{tikzpicture}
\end{center}
Property (WG2$'$) will be called the \emph{local triangle inequality}. Roughly speaking, the local triangle inequality is the triangle inequality where one of the sides of the triangle has $W$-length a generator $s\in S$. We will describe the precise relationship between the triangle inequality and its local version in \autoref{sec:gallerys and geodesics}.
\begin{enumerate}  [itemindent=0cm, leftmargin=1.9cm]
\item [\textbf{(WG3)}]
For all edges $g\in \cG$ and for each $s\in S$ such that $\gd(g)s<\gd(g)$, there exists an edge $h\in \cG$ with $\iota(h)=\iota(g)$ such that
\[\gd(h^{-1}g)=s\qquad   \text{and}\qquad   \gd(h)=\gd(g)s.\]
\end{enumerate}
Schematically, we have
\begin{center}
	\begin{tikzpicture}
	\node (11) at (0,0) {};
	\node (21) at (0,3) {};
	\node (22) at (-0.6,2.2) {};
	
	\draw[fill=black] (11) circle [radius=0.06];
	\draw[fill=black] (21) circle [radius=0.06];
	\draw[fill=black] (22) circle [radius=0.06];

	\draw [black, ->, shorten >=0.2cm] (11.center) to (21.center);
	\draw [black, ->, shorten >=0.2cm] (11.center) to (22.center);
	\draw [black, ->, shorten >=0.2cm] (22.center) to (21.center);
	
	\node[xshift=-1.35cm] at (-0.3,1.1) {$\gd(h)=\gd(g)s$}; 
	\node[xshift=0.3cm] at (0,1.5) {$g$}; 
	\node[yshift=0cm] at (-0.6,2.7) {$s$}; 
	\end{tikzpicture}
\end{center}
Property (WG3) can be viewed as the analog of being a geodesic metric space in classical metric geometry. We make this more precise in \autoref{sec:gallerys and geodesics}. Finally, we call a function $\gd:\cG\to W$ \emph{weak} if for all chambers $C\in \cG$ and all $s\in S$, there exists a edge $g\in \cG$ with $\iota(g)=C$ and $\gd(g)=s$. Property (WG3) together with weakness is analogous to the existence of geodesic rays in metric geometry.

\section{Some Consequences}

We now prove some easy consequences of properties (WG1), (WG2), and (WG2$'$).

\begin{lem} \label{lem:inverses2}
Let $\gd:\cG\to W$ be a function which satisfies properties (WG1) and (WG2$'$). For any edge $g\in \cG$ with $\gd(g)\in S$, we have $\gd(g^{-1})=\gd(g)$.
\end{lem}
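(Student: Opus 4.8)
The plan is to derive the lemma from a single, carefully chosen application of the local triangle inequality (WG2$'$). The first thing to notice is that the naive approach fails: if one feeds $g$ and $g^{-1}$ directly into (WG2$'$), the relevant composite $g g^{-1}$ is an identity edge, whose $W$-length is $1\notin S$ by (WG1), so the hypothesis of (WG2$'$) is not met. The key idea is instead to arrange the composite appearing in (WG2$'$) to be $g$ itself (so that its $W$-length is $s\in S$ and the property applies), while forcing the \emph{output} edge to be an identity edge, whose $W$-length is pinned to $1$ by (WG1).

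Concretely, I would apply (WG2$'$) to the pair $a:=g^{-1}$ and $b:=1_{\tau(g)}$, the identity edge at the terminal vertex of $g$. Since $\iota(a)=\tau(g)=\iota(b)$, the composite $a^{-1}b$ is defined and equals $g\cdot 1_{\tau(g)}=g$, so that $\gd(a^{-1}b)=\gd(g)=s\in S$ and (WG2$'$) indeed applies. Writing $t:=\gd(g^{-1})=\gd(a)$, the property now compares $t$ and $ts$ in the Bruhat order, with the output value forced to be $\gd(b)=\gd(1_{\tau(g)})=1$ by (WG1).

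It then remains to run a short case analysis in $W$. In case (ii), $ts>t$, property (WG2$'$) gives $1=ts$, hence $t=s$ since $s$ is an involution; but then $ts=1<t$, contradicting $ts>t$, so this case is vacuous. In case (i), $ts<t$, property (WG2$'$) gives $1\in\{t,ts\}$; here $t=1$ is impossible, as it would give $ts=s>1=t$, i.e. $ts>t$, against the case hypothesis, so necessarily $ts=1$ and again $t=s$. In either case $\gd(g^{-1})=t=s=\gd(g)$, as required.

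I expect the only genuine obstacle to be spotting the correct inputs to (WG2$'$) — in particular, realizing that an identity edge must be used so that (WG1) pins one $W$-length to $1$. Once the right substitution is in hand, the argument reduces to the standard Coxeter-group facts that exactly one of $ts<t$ and $ts>t$ holds for $t\in W$ and $s\in S$, together with $s^2=1$.
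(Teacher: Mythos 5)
Your proof is correct and takes essentially the same route as the paper: the paper likewise applies (WG2$'$) with the roles played by $g^{-1}$ and the identity edge $h=g^{-1}g=1_{\tau(g)}$, so that (WG1) pins $\gd(h)=1$ and the Bruhat-order case analysis forces $\gd(g^{-1})=s$. If anything, your explicit elimination of $t=1$ via the case hypotheses is more careful than the paper's terse claim that $w\neq 1$ follows from (WG1), which really rests on the same observation you spell out.
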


\begin{proof}
Let $\gd(g)=s$, and $\gd(g^{-1})=w$. In the definition of (WG2$'$), put $h=g^{-1}g$, and redefine $g$ to be $g^{-1}$. Then we get
\[1=\gd(h)  \in \{w,ws\} .\] 
\noindent But by (WG1), $w\neq 1$. Thus, $1=ws$ and $\gd(g^{-1})=w=s$.
\end{proof}

If we replace (WG2$'$) with (WG2) in \autoref{lem:inverses2}, then we obtain the following stronger result.

\begin{lem} \label{lem:inverses}
Let $\gd:\cG\to W$ be a function which satisfies properties (WG1) and (WG2). For any edge $g\in \cG$, we have $\gd(g^{-1})=\gd(g)^{-1}$.
\end{lem}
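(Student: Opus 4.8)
The plan is to exploit the single groupoid relation $gg^{-1}=1_{\iota(g)}$ together with the fact that the identity $1\in W$ is the minimum of the Bruhat order. Write $w=\gd(g)$ and $v=\gd(g^{-1})$. Since $g$ runs from $\iota(g)$ to $\tau(g)$ and $g^{-1}$ runs from $\tau(g)$ to $\iota(g)$, the composition $gg^{-1}$ is defined and equals the identity edge $1_{\iota(g)}$, so (WG1) gives $\gd(gg^{-1})=1$.

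Next I would feed this composition into the triangle inequality. Applying (WG2) to the pair $g,g^{-1}$ yields
\[
1=\gd(gg^{-1})\geq \gd(g)\gd(g^{-1})=wv,
\]
so $wv\leq 1$ in the Bruhat order. The whole argument then turns on the order-theoretic fact that $1$ is the unique minimal element of $(W,\leq)$; hence $wv\leq 1$ forces $wv=1$, i.e.\ $v=w^{-1}$. This is exactly $\gd(g^{-1})=\gd(g)^{-1}$.

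I expect the substantive point to be not a calculation but the observation that, although (WG2) is only an inequality, composing with $g^{-1}$ lands at the bottom of the Bruhat order and so collapses the inequality to an equality in one stroke. This is precisely the improvement over \autoref{lem:inverses2}: with only (WG2$'$) one could control $W$-lengths in $S$, whereas (WG2) applies to edges of arbitrary $W$-length, and the minimality of $1$ does the rest. There is no need to argue the reverse inequality separately, since $wv=1$ already pins down $v$ uniquely as $w^{-1}$.
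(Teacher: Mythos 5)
Your proof is correct and coincides with the paper's own argument: both apply (WG1) and (WG2) to the composition $gg^{-1}$ to get $1\geq \gd(g)\gd(g^{-1})$ in the Bruhat order, then use minimality of $1$ to force equality. Your write-up merely makes explicit the order-theoretic step the paper leaves implicit.
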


\begin{proof}
Using (WG1) and (WG2), we get 
\[1=\gd(1)=\gd(gg^{-1})\geq \gd(g)\gd(g^{-1}).\] 
Therefore $\gd(g)\gd(g^{-1})=1$, and the result follows. 
\end{proof}    

\begin{lem} \label{prop:doublecoset}
Let $\gd:\cG\to W$ be a function which satisfies properties (WG1) and (WG2). For all edges $g,h\in \cG$ such that $g^{-1} h$ is defined in $\cG$ and $\gd(g^{-1} h)=1$, we have $\gd(g)=\gd(h)$.
\end{lem}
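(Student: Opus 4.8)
The plan is to sandwich the two $W$-lengths by applying the triangle inequality (WG2) in both directions and then to invoke the antisymmetry of the Bruhat order. Since $g^{-1}h$ is defined we have $\iota(g)=\iota(h)$, and in the groupoid $\cG$ the factorization $g(g^{-1}h)=(gg^{-1})h=1_{\iota(g)}h=h$ holds. Applying (WG2) to this factorization gives
\[
\gd(h)=\gd\bigl(g(g^{-1}h)\bigr)\ \geq\ \gd(g)\,\gd(g^{-1}h)=\gd(g)\cdot 1=\gd(g),
\]
using the hypothesis $\gd(g^{-1}h)=1$. This already yields $\gd(g)\leq\gd(h)$ in the Bruhat order.

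For the reverse inequality I would run the same argument with the roles of $g$ and $h$ exchanged, factoring $g=h(h^{-1}g)$. The point that needs checking is that the edge $h^{-1}g$ again has $W$-length $1$. Here I would use the groupoid identity $h^{-1}g=(g^{-1}h)^{-1}$ together with \autoref{lem:inverses}, which is available because $\gd$ satisfies (WG1) and (WG2); it gives
\[
\gd(h^{-1}g)=\gd\bigl((g^{-1}h)^{-1}\bigr)=\gd(g^{-1}h)^{-1}=1^{-1}=1.
\]
Applying (WG2) to $g=h(h^{-1}g)$ then gives $\gd(g)\geq\gd(h)\,\gd(h^{-1}g)=\gd(h)$, so $\gd(h)\leq\gd(g)$.

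Combining the two inequalities $\gd(g)\leq\gd(h)$ and $\gd(h)\leq\gd(g)$, the antisymmetry of the Bruhat order forces $\gd(g)=\gd(h)$, which is the claim.

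The argument is almost entirely formal, so there is no serious obstacle; the only genuine input beyond (WG2) is the inverse-compatibility \autoref{lem:inverses}, which is precisely what makes the hypothesis symmetric in $g$ and $h$ and thereby upgrades a one-sided Bruhat inequality into an equality. The main thing to be careful about is the composition convention: one should confirm that $g(g^{-1}h)$ and $h(h^{-1}g)$ are both defined in $\cG$ (each reduces to the single requirement $\iota(g)=\iota(h)$) and that they collapse to $h$ and $g$ respectively via $gg^{-1}=1_{\iota(g)}$ and $hh^{-1}=1_{\iota(h)}$.
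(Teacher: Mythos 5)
Your proof is correct, but it takes a different route from the paper's, which is a one-step argument: the paper applies (WG2) directly to the decomposition $g^{-1}h = (g^{-1})\,h$, obtaining
\[
1=\gd(g^{-1}h)\geq \gd(g^{-1})\gd(h)=\gd(g)^{-1}\gd(h)
\]
via \autoref{lem:inverses}, and then concludes from the fact that the identity is the \emph{minimum} of the Bruhat order that $\gd(g)^{-1}\gd(h)=1$. You instead decompose $h=g(g^{-1}h)$ and $g=h(h^{-1}g)$, apply (WG2) twice to sandwich $\gd(g)$ and $\gd(h)$ against each other, and finish with \emph{antisymmetry} of the Bruhat order. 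Both arguments lean on \autoref{lem:inverses} (the paper to convert $\gd(g^{-1})$ into $\gd(g)^{-1}$, you to see that $\gd(h^{-1}g)=1$), and your careful verification that $g(g^{-1}h)$ and $h(h^{-1}g)$ are defined and collapse correctly is exactly the bookkeeping the paper leaves implicit. The paper's version is shorter and exploits the special position of $1$ in the Bruhat order; yours is slightly longer but uses only that the Bruhat order is a partial order, and it makes the symmetry of the conclusion in $g$ and $h$ structurally visible rather than extracting it from a single asymmetric inequality.
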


\begin{proof}
Using (WG2) and \autoref{lem:inverses}, we get
\[1=\gd(g^{-1} h)\geq \gd(g^{-1})\gd(h)=\gd(g)^{-1}\gd(h).\] 
\noindent So $\gd(g)^{-1}\gd(h)=1$, or equivalently $\gd(g)=\gd(h)$.
\end{proof} 


\begin{lem} \label{lem:panelsclosed}
Let $\gd:\cG\to W$ be a function which satisfies properties (WG1) and (WG2). Let $g,h\in \cG$ be edges such that $gh$ is defined in $\cG$, and $\gd(g), \gd(h)\in S$. Then we have the following:

\begin{enumerate}[label=(\roman*)]
\item
if $\gd(g)=\gd(h)=s$, then $\gd(gh)\in \{1,s\}$ 
\item
if  $\gd(g)\neq\gd(h)$, then $\gd(gh)=\gd(g)\gd(h)$.
\end{enumerate}
\end{lem}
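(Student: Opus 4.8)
The plan is to reduce both parts to the triangle inequality (WG2) together with the elementary fact that, in the Bruhat order, an element lies below a generator $s\in S$ if and only if it equals $1$ or $s$. Throughout I would abbreviate $s=\gd(g)$, $t=\gd(h)$, and $w=\gd(gh)$, and I would use \autoref{lem:inverses} to record that $\gd(g^{-1})=s^{-1}=s$ and $\gd(h^{-1})=t^{-1}=t$, since generators of a Coxeter group are involutions. The one non-obvious move is that applying (WG2) directly to the composite $gh$ gives only the lower bound $w\geq st$, which is vacuous for part (i) (where $st=1$) and insufficient for part (ii). To extract an \emph{upper} bound I would instead apply (WG2) "in reverse" to the factorization $g=(gh)h^{-1}$, which is defined precisely because $gh$ is. This yields $s=\gd(g)\geq\gd(gh)\gd(h^{-1})=wt$, i.e. $wt\leq s$.

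For part (i) we have $t=s$, so the bound above reads $ws\leq s$, whence $ws\in\{1,s\}$. Right-multiplying by $s$ (again an involution) gives $w\in\{s,1\}$, which is exactly the claim $\gd(gh)\in\{1,s\}$.

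For part (ii) I would combine the lower bound $w\geq st$, obtained by applying (WG2) directly to $g$ and $h$, with the upper bound $wt\leq s$ just derived. The latter forces $wt\in\{1,s\}$, hence $w\in\{t,st\}$. Since $s\neq t$ the word $st$ is reduced, so $\ell(st)=2$, and $st\leq w$ forces $\ell(w)\geq 2$; this rules out $w=t$ (which has length $1$), leaving $w=st=\gd(g)\gd(h)$. A symmetric alternative, avoiding the length count, is to apply (WG2) also to $h=g^{-1}(gh)$, obtaining $sw\leq t$ and hence $w\in\{s,st\}$; intersecting with $\{t,st\}$ gives $\{st\}$ directly, since $s\neq t$.

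I do not expect a genuine obstacle here: the entire argument is driven by the single idea of reversing the triangle inequality via $g=(gh)h^{-1}$, which converts the "wrong-way" inequality (WG2) into a usable ceiling on $\gd(gh)$. The only points requiring care are checking that the composites $(gh)h^{-1}$ and $g^{-1}(gh)$ are indeed defined in $\cG$, and invoking the relation $s^{2}=1$ in $W$ at the appropriate moments.
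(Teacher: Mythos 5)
Your proof is correct and takes essentially the same approach as the paper's: both convert (WG2) into an upper Bruhat bound on $\gd(gh)$ by writing one factor through the product (you use $g=(gh)h^{-1}$; the paper uses the mirror-image decomposition $h=g^{-1}(gh)$, which is also your ``symmetric alternative''), and in part (ii) both combine this with the direct lower bound $\gd(gh)\geq \gd(g)\gd(h)$ to single out $st$. The only cosmetic difference is left- versus right-cancellation and that you make explicit the length comparison $\ell(st)=2$ that the paper leaves implicit.
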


\begin{proof}
Suppose that $\gd(g)=\gd(h)=s$, and let $k=gh$. Then 
\[s =\gd(h)  =  \gd (g^{-1}k)\geq \gd (g^{-1})\gd (k)=s\gd(k).\]
\noindent Thus, $s\gd(k)\in \{1,s  \}$, and so $\gd(gh)=\gd(k) \in \{1,s\}$. Now suppose that $\gd(g)\neq\gd(h)$. Let $s=\gd(g)$ and $t=\gd(h)$. Then
\[t =\gd(h)  =  \gd (g^{-1}k)\geq \gd (g^{-1})\gd (k)=s\gd(k).\]
\noindent Thus, $s\gd(k)\in \{1,t  \}$, and so $\gd(k) \in \{s,st\}$. But
\[   \gd(k)=\gd(gh)\geq st  . \]
\noindent Therefore we must have $\gd(gh)=\gd(k)=st$.
\end{proof}

The following lemma shows that in presence of (WG1), the local triangle inequality is weaker than the triangle inequality.

\begin{lem} \label{lem:wg2implieswg2'}
Let $\gd:\cG\to W$ be a function which satisfies properties (WG1) and (WG2). Then $\gd$ satisfies the local triangle inequality (WG2$'$). 
\end{lem}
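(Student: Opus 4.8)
The plan is to derive the two cases of the local triangle inequality (WG2$'$) directly from the triangle inequality (WG2) and its consequences already established. Recall the setup of (WG2$'$): we are given edges $g,h$ with $g^{-1}h$ defined, and we set $w=\gd(g)$ and $s=\gd(g^{-1}h)\in S$; we must control $\gd(h)$. The key observation is that $h=g(g^{-1}h)$, so that $\gd(h)$ can be bounded below using (WG2) applied to this composition.

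First I would apply (WG2) to the composition $h = g\,(g^{-1}h)$ to obtain
\[\gd(h)=\gd\bigl(g(g^{-1}h)\bigr)\geq \gd(g)\,\gd(g^{-1}h)=ws.\]
This single inequality already resolves the case $ws>w$: in that case I would show $\gd(h)=ws$ by combining the lower bound $\gd(h)\geq ws$ with a matching upper bound. For the upper bound I would write $g = h\,(h^{-1}g)$, note that $\gd(h^{-1}g)=\gd((g^{-1}h)^{-1})=s$ by \autoref{lem:inverses2} (since $s\in S$), and apply (WG2) again to get $w=\gd(g)\geq \gd(h)\,s$, hence $\gd(h)\leq ws$ (using that $ws>w$ forces $\gd(h)s$ to be $\geq \gd(h)$-adjacent in the Bruhat order, so $\gd(h)s = w$ gives $\gd(h)\in\{w,ws\}$ and the lower bound $\gd(h)\geq ws>w$ rules out $\gd(h)=w$). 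Thus $\gd(h)=ws$.

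For the case $ws<w$, the lower bound gives $\gd(h)\geq ws$, and I would again use $g=h(h^{-1}g)$ with $\gd(h^{-1}g)=s$ to obtain $w=\gd(g)\geq\gd(h)\,s$. Writing $v=\gd(h)$, the two facts $v\geq ws$ and $vs\leq w$ together with elementary properties of the Bruhat order (multiplication by a single generator $s$ moves an element to a Bruhat-comparable element differing in length by exactly one) should pin $v$ down to the set $\{w,ws\}$. The main obstacle I anticipate is precisely this combinatorial step: translating the two multiplicative inequalities $v\geq ws$ and $vs\leq w$ into the conclusion $v\in\{w,ws\}$ requires care, since the Bruhat order is not totally ordered and multiplication by $s$ interacts with it only through the lifting/exchange properties. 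I would handle this by casework on whether $vs<v$ or $vs>v$, using in each subcase that $\{v,vs\}$ and $\{w,ws\}$ are each $s$-cosets $\{x,xs\}$ lying in a common interval, and that the inequalities force these two-element chains to coincide. This Bruhat-order bookkeeping is routine but is the only place where genuine Coxeter-combinatorial input (beyond the formal groupoid manipulations) is needed.
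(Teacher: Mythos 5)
Your argument is essentially correct but takes a genuinely different route from the paper's, and one citation must be repaired: to get $\gd(h^{-1}g)=s$ you invoke \autoref{lem:inverses2}, which assumes (WG2$'$) --- the very property being proved --- so as written the step is circular; cite \autoref{lem:inverses} instead (it needs only (WG1) and (WG2), and gives $\gd(h^{-1}g)=\gd(g^{-1}h)^{-1}=s$), after which everything you use is available. The structural difference is this: the paper applies (WG2) to the \emph{short} edge, writing $k=g^{-1}h$ and bounding $s=\gd(k)\geq \gd(g)^{-1}\gd(h)$ via \autoref{lem:inverses}; since the Bruhat interval below a generator is just $\{1,s\}$, this yields $\gd(h)\in\{w,ws\}$ at once, covering case (i) and half of case (ii), and your first inequality $\gd(h)\geq ws$ then settles case (ii). You instead apply (WG2) only to the two long edges $h$ and $g$, obtaining $v:=\gd(h)\geq ws$ and $vs\leq w$, and must then extract $v\in\{w,ws\}$ by genuine Coxeter combinatorics --- the step you rightly flag as the crux. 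It does close: in case (ii), $\ell(v)\geq \ell(w)+1$ and $\ell(vs)\leq \ell(w)$ force $\ell(vs)=\ell(v)-1=\ell(w)$, so $vs\leq w$ with equal lengths gives $vs=w$, i.e.\ $v=ws$ (your parenthetical asserting $\gd(h)s=w$ directly from $w\geq \gd(h)s$ is garbled; this length count is the correct repair); in case (i), if $vs>v$ then $\ell(v)=\ell(ws)$ and $v\geq ws$ force $v=ws$, while if $vs<v$ the lifting property of the Bruhat order (if $u\leq w$ and $ws<w$ then $us\leq w$, applied to $u=vs$) gives $v\leq w$, and $ws\leq v\leq w$ with $\ell(w)-\ell(ws)=1$ leaves only $v\in\{ws,w\}$. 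So your proof works, but note what the paper's choice of decomposition buys: applying the triangle inequality to the edge of $W$-length $s$ concentrates all order-theoretic input into the triviality of the interval $[1,s]$, giving a three-line proof, whereas your two ``outer'' inequalities require the lifting property as additional Coxeter-theoretic input.
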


\begin{proof}
Let $g,h\in \cG$ with $\gd(g^{-1}h)=s$. Let $k=g^{-1}h$. Then
\[s=\delta(k)=\delta(g^{-1}h)       \geq \delta(g)^{-1}\delta(h). \]
\noindent Thus, $\delta(g)^{-1}\delta(h)\in \{1,s\}$, and so $\gd(h)\in \{\gd(g), \gd(g)s\}$. Also
\[\delta(h) = \delta(gk)\geq \delta(g)\delta(k)=\delta(g)s .       \]
\noindent Thus, if $\gd(g)s>\gd(g)$, then $\gd(h)=\gd(g)s$.
\end{proof}


\section{Galleries and Geodesics.} \label{sec:gallerys and geodesics} \index{$W$-groupoid!geodesic}

We now define galleries and geodesics (galleries of reduced type) for functions $\gd:\cG\to W$. We show that property (WG3) ensures that we have geodesics, and we prove the equivalence of the local and global triangle inequalities in the presence of (WG1) and (WG3). 

Let $\gd:\cG\to W$ be a function. A \emph{gallery} of an edge $g\in \cG$ is a word $g_1\dots g_n$, where $g_k\in \cG_1$, such that
\begin{enumerate} [label=(\roman*)]
\item
$\gd(g_k)\in S$ for all $k\in \{ 1,\dots,n  \}$
\item
$g_k g_{k+1}$ is defined in $\cG$ for all $k\in \{  1,\dots,n-1 \}$
\item
$g=g_1\dots g_n$.
\end{enumerate}
We call the word $\gd(g_1)\dots \gd(g_n)$ over $S$ the \emph{type} of the gallery $g_1\dots g_n$. A \emph{geodesic} of $g$ is a gallery $g=g_1\dots  g_n$ whose type is a reduced word for $(W,S)$. The following proposition shows that in the presence of (WG2$'$) and (WG3), geodesics behave as their name implies.

\begin{prop} \label{prop:reduceddecompgeos}
Let $\delta:\cG\to W$ be a function which satisfies properties (WG2$'$) and (WG3). Let $g=g_1 \dots g_n$ be a geodesic of a edge $g\in \cG$. Then 
\[\gd(g)=\gd(g_1)\dots \gd(g_n).\]
\end{prop}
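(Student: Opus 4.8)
The plan is to induct on the number $n$ of edges in the geodesic, peeling off the \emph{last} edge $g_n$ (not the first) so that the local triangle inequality (WG2$'$) becomes applicable. For the base case $n=1$ there is nothing to prove, since $g=g_1$ and $\delta(g)=\delta(g_1)$ is already the one-term product.

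For the inductive step, I would write $a=g_1\cdots g_{n-1}$, so that $g=a\,g_n$ as a composition in $\cG$; this is legitimate because a gallery is by definition a composable chain, so $a$ is a well-defined edge with $\iota(a)=\iota(g_1)=\iota(g)$ and $\tau(a)=\iota(g_n)$. The word $\delta(g_1)\cdots\delta(g_{n-1})$ is a prefix of the reduced word $\delta(g_1)\cdots\delta(g_n)$, hence is itself reduced; therefore $g_1\cdots g_{n-1}$ is a geodesic of $a$, and the induction hypothesis gives $\delta(a)=\delta(g_1)\cdots\delta(g_{n-1})$, an element of length $n-1$.

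Now I would apply (WG2$'$) to the pair $(a,g)$. These two edges share an initial vertex, and $a^{-1}g=g_n$ telescopes, so $\delta(a^{-1}g)=\delta(g_n)\in S$; thus the hypotheses of (WG2$'$) are met with $w=\delta(a)$ and $s=\delta(g_n)$. The key point is that $ws=\delta(g_1)\cdots\delta(g_n)$ has length $n$ (the type is reduced), while $\ell(w)=n-1$, so $ws>w$ in the Bruhat order. Case (ii) of (WG2$'$) then forces $\delta(g)=ws=\delta(g_1)\cdots\delta(g_n)$, completing the induction.

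The only genuine ingredient beyond bookkeeping is the standard Coxeter fact that a prefix of a reduced word is reduced, which is exactly what guarantees $\ell(w)=n-1$ and hence keeps us in the ascending case (ii) of (WG2$'$), where $\delta(h)$ is pinned down to a single value rather than only to the two-element set $\{w,ws\}$. I expect the one thing to be careful about is orientation: one must peel off the right-hand edge $g_n$ so that $a^{-1}g$ has $W$-length in $S$, since peeling off $g_1$ instead would leave $g_1^{-1}g$ with $W$-length $\delta(g_2)\cdots\delta(g_n)\notin S$ and (WG2$'$) would not apply. Notably, this argument appears to use only (WG2$'$); property (WG3) is not invoked for this direction.
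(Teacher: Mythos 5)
Your proof is correct and takes essentially the same route as the paper: the paper also inducts on $n$, peels off the last edge by setting $h=gg_n^{-1}$ (your $a$), applies the induction hypothesis to get $\gd(h)=\gd(g_1)\cdots\gd(g_{n-1})$, and concludes via the ascending case of (WG2$'$) since $\gd(h)\gd(g_n)>\gd(h)$. Your closing observation is also accurate --- the paper's proof likewise never invokes (WG3), so that hypothesis is not needed for this proposition.
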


\begin{proof}
We prove by induction on $n$. The result is trivial for $n=1$. Suppose that the result holds for $n-1$. Let $h=gg_n^{-1}$. Then, by the induction hypothesis, we have 
\[\gd(h)=\gd(g_1)\dots \gd(g_{n-1}).\] 
But $g=hg_n$ and $\gd(h)\gd(g_n)>\gd(h)$. Therefore $\gd(g)=\gd(g_1)\dots \gd(g_n)$ by the local triangle inequality.
\end{proof}

We now show that (WG3) can be viewed as the property that `all possible' geodesics exist.
 
\begin{lem} \label{geodesicmetricspace}
Let $\delta:\cG\to W$ be a function which satisfies property (WG3). Let $g\in \cG$ be a edge and put $w=\gd(g)$. For every reduced decomposition $f$ of $w$, there exists a geodesic of $g$ whose type is $f$.  
\end{lem}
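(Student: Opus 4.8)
The plan is to induct on the Coxeter length $n=\ell(w)$ of $w=\gd(g)$, peeling off one generator at a time from the right-hand end of the reduced word $f=s_1\dots s_n$ using property (WG3). The base case $n=1$ is immediate: here $f=s_1$ with $\gd(g)=s_1\in S$, so the one-term word $g$ is itself a gallery of $g$ of type $f$. (The degenerate case $n=0$, where $f$ is empty and $w=1$, I would set aside: a geodesic of empty type would be the empty gallery, and note that (WG3) says nothing about edges of $W$-length $1$, so this case does not interact with the induction.)

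For the inductive step, suppose $n\geq 2$ and that the result holds for every edge whose $W$-length admits a shorter reduced decomposition. Since $f=s_1\dots s_n$ is reduced, its prefix $s_1\dots s_{n-1}$ is reduced as well, so $ws_n=s_1\dots s_{n-1}$ has length $n-1<n$; in particular $ws_n<w$ in the Bruhat order. This is exactly the hypothesis needed to invoke (WG3) with $s=s_n$: I obtain an edge $h\in\cG$ with $\iota(h)=\iota(g)$ such that $\gd(h^{-1}g)=s_n$ and $\gd(h)=ws_n=s_1\dots s_{n-1}$.

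Now I would set $g_n:=h^{-1}g$, which is an edge with $\gd(g_n)=s_n\in S$, and observe that $s_1\dots s_{n-1}$ is a reduced decomposition of $\gd(h)$ of length $n-1$. Applying the induction hypothesis to $h$ yields a gallery $h=g_1\dots g_{n-1}$ of type $s_1\dots s_{n-1}$. Composing, $g=h\,(h^{-1}g)=g_1\dots g_{n-1}g_n$; the composition $g_{n-1}g_n$ is defined because $g_1\dots g_{n-1}=h$ terminates at $\tau(h)$, which is precisely the initial vertex of $g_n=h^{-1}g$. Hence $g_1\dots g_n$ is a gallery of $g$, and its type is $s_1\dots s_{n-1}s_n=f$, a reduced word, so it is a geodesic of $g$ of the required type.

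The argument is essentially routine groupoid bookkeeping once the peeling direction is fixed; the only two points requiring genuine care are (i) confirming the length-decreasing condition $ws_n<w$ that licenses the use of (WG3)---this is where reducedness of $f$ is essential, via the fact that every prefix of a reduced word is reduced---and (ii) tracking initial and terminal vertices so that the factorization $g=h\,g_n$ and all intermediate compositions are actually defined in $\cG$. I expect (ii) to be the main place an error could slip in, since (WG3) returns an edge $h$ that merely \emph{shares} the initial vertex of $g$ rather than sitting directly in front of $g$ as a factor; one must check that $h^{-1}g$ recombines with $h$ to recover $g$ exactly, and that the initial vertex of $g_n$ matches the terminal vertex of the inductively produced gallery for $h$.
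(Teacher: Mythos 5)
Your proof is correct and follows essentially the same route as the paper's: induction on the length of $f$, peeling off the last letter $s_n$ via (WG3) to get $h$ with $\gd(h)=ws_n$, applying the induction hypothesis to $h$, and appending $g_n=h^{-1}g$. The extra details you supply---verifying $ws_n<w$ from reducedness of the prefix, and checking that the vertex bookkeeping makes all compositions defined---are exactly the points the paper leaves implicit, and they check out.
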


\begin{proof}
We prove by induction on the length $n$ of $f$. The result is trivial for $n=1$, since in this case $g$ is a geodesic of itself. Suppose that the result holds for $n-1$. Let $f=s_1\dots s_n$. By (WG3), there exists a edge $h\in \cG$ such that $\gd(h)=ws_n$ and $\gd(h^{-1}g)=s_n$. Put $g_n=h^{-1}g$. Using the induction hypothesis, let $g_1 \dots g_{n-1}$ be a geodesic of $h$ with type $s_1\dots s_{n-1}$. Then $g_1 \dots g_{n-1}g_n$ is the required geodesic of $g$.
\end{proof}

In fact, for each $f$, the geodesic is unique. Using geodesics, we can now strength the result of \autoref{lem:inverses2} by adding (WG3) to the hypothesis.

\begin{cor} \label{cor:inv}
Let $\delta:\cG\to W$ be a function which satisfies properties (WG1), (WG2$'$), and (WG3). Then for any edge $g\in \cG$, we have $\gd(g^{-1})=\gd(g)^{-1}$.
\end{cor}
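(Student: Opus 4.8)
The plan is to leverage the geodesic machinery just established, together with the unique decomposition property of geodesics along reduced words, to upgrade \autoref{lem:inverses2} (which handles only the case $\gd(g) \in S$) to arbitrary edges $g$. The target identity is $\gd(g^{-1}) = \gd(g)^{-1}$, so writing $w = \gd(g)$, I want to show $\gd(g^{-1}) = w^{-1}$.

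First I would fix a reduced decomposition $w = s_1 \dots s_n$ of $w = \gd(g)$. By \autoref{geodesicmetricspace}, there is a geodesic $g = g_1 \dots g_n$ of $g$ whose type is $s_1 \dots s_n$, meaning each $\gd(g_k) = s_k \in S$. The key observation is that $g^{-1} = g_n^{-1} \dots g_1^{-1}$, and I claim this word is itself a geodesic of $g^{-1}$. Indeed, each $g_k^{-1}$ is a legitimate edge of $\cG$, consecutive composites $g_{k+1}^{-1} g_k^{-1}$ are defined (since $g_k g_{k+1}$ was), and by \autoref{lem:inverses2} — which applies precisely because each $\gd(g_k) \in S$ — we have $\gd(g_k^{-1}) = \gd(g_k) = s_k$. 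Hence the type of the word $g_n^{-1} \dots g_1^{-1}$ is $s_n \dots s_1$, which is a reduced word exactly because $s_1 \dots s_n$ is reduced (reversing a reduced word gives a reduced word, as it is a reduced decomposition of $w^{-1}$). Therefore $g_n^{-1} \dots g_1^{-1}$ is a geodesic of $g^{-1}$.

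Finally I would apply \autoref{prop:reduceddecompgeos} to this geodesic of $g^{-1}$: since $\delta$ satisfies (WG2$'$) and (WG3) by hypothesis, the proposition gives
\[
\gd(g^{-1}) = \gd(g_n^{-1}) \dots \gd(g_1^{-1}) = s_n \dots s_1 = (s_1 \dots s_n)^{-1} = w^{-1} = \gd(g)^{-1},
\]
which is the desired conclusion.

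The main obstacle is making sure that \autoref{prop:reduceddecompgeos} and \autoref{lem:inverses2} each apply under exactly the hypotheses assumed here, namely (WG1), (WG2$'$), and (WG3); I should double-check that \autoref{prop:reduceddecompgeos} only needs (WG2$'$) and (WG3) — which it does — and that the reversal $g^{-1} = g_n^{-1} \dots g_1^{-1}$ genuinely satisfies all three clauses in the definition of a gallery, the subtle point being the composability of the reversed edges. Everything else is the routine bookkeeping of (WG1) forcing the letter-wise inverse lengths to coincide with the original generators via \autoref{lem:inverses2}.
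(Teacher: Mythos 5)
Your proof is correct and is essentially the paper's own argument: take a geodesic of $g$ via \autoref{geodesicmetricspace}, reverse it to get a geodesic of $g^{-1}$, identify the letter-wise lengths via \autoref{lem:inverses2}, and conclude with \autoref{prop:reduceddecompgeos}. You simply spell out the details the paper leaves implicit (composability of the reversed edges and that the reversal of a reduced word is reduced), which is harmless.
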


\begin{proof}
By \autoref{geodesicmetricspace}, there exists a geodesic $g_1\dots g_n$ of $g$. Then $g_n^{-1}\dots g_1^{-1}$ is a geodesic of $g^{-1}$. But $\gd(g_j^{-1})=\gd(g_j)$ by \autoref{lem:inverses2}. The result then follows by \autoref{prop:reduceddecompgeos}.
\end{proof}

The following proposition shows that in the presence of (WG1) and (WG3), the local triangle inequality implies the triangle inequality. 

\begin{prop}  \label{lem:localtri}
Let $\gd:\cG\to W$ be a function which satisfies properties (WG1), (WG2$'$), and (WG3). Then $\gd$ satisfies property (WG2).
\end{prop}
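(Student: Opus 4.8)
The plan is to induct on the length $\ell(v)$ of $v := \gd(h)$, peeling one generator off $h$ at each stage and controlling the effect on $\gd(gh)$ through the local triangle inequality (WG2$'$). Throughout, write $w := \gd(g)$; the goal is $\gd(gh) \geq wv$.

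For the inductive step, suppose $\ell(v) \geq 1$ and pick a right descent $s \in S$ of $v$, so $vs < v$. By \autoref{geodesicmetricspace} there is a geodesic of $h$ whose type ends in $s$; writing $h = h' k$ with $\gd(k) = s$ and $\gd(h') = vs$ (the latter by \autoref{prop:reduceddecompgeos}), we have $gh = (gh')k$. The inductive hypothesis applied to $g$ and $h'$ gives $y := \gd(gh') \geq wvs$. Since $gh'$ and $gh$ share the initial vertex $\iota(g)$ and $(gh')^{-1}(gh) = k$ has $W$-length $s$, the local triangle inequality (WG2$'$) tells us that $\gd(gh) = ys$ if $ys > y$, and $\gd(gh) \in \{y, ys\}$ if $ys < y$.

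It then remains to deduce $\gd(gh) \geq wv$ from $y \geq wvs$, and here I would use standard properties of the Bruhat order. Writing $a := wvs$ (so $wv = as$), the hypothesis is $a \leq y$ and the goal is $\gd(gh) \geq as$. If $ys > y$, then $\gd(gh) = ys$, and $as \leq ys$ follows either from $as \leq a \leq y \leq ys$ (when $as < a$) or from the monotonicity of the Demazure operator $x \mapsto \max\{x, xs\}$ applied to $a \leq y$ (when $as > a$). If $ys < y$, then $\gd(gh) \in \{y, ys\}$; a short Bruhat-order argument (the lifting property) shows $as \leq y$, so the value $y$ already satisfies the bound, and when $as < a$ the monotonicity of $x \mapsto \min\{x, xs\}$ gives $as \leq ys$ as well, settling both values.

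The main obstacle is the single surviving configuration: $ys < y$, $as > a$, and $\gd(gh) = ys$. Here right multiplication by $s$ is not order-preserving — indeed $a \leq y$ does not force $as \leq ys$ — so the inequality cannot be closed by Bruhat combinatorics alone, and this is exactly where property (WG3) must enter. Since $ys < y$, (WG3) produces an edge $f$ with $\iota(f) = \iota(gh')$, $\gd(f) = ys$, and $\gd(f^{-1}(gh')) = s$. The composite $f^{-1}(gh) = \big(f^{-1}(gh')\big)k$ is then a product of two edges of $W$-length $s$, which (WG2$'$) together with (WG1) forces into $\{1, s\}$; if it equals $s$, a second application of (WG2$'$) pins $\gd(gh)$ to the larger value $y \geq wv$, ruling out the bad case. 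I expect the delicate point to be the residual possibility that $f^{-1}(gh)$ has $W$-length $1$, which must be excluded (or shown still to yield $\gd(gh) \geq wv$) using the rigidity of (WG3) and the uniqueness of geodesics noted after \autoref{geodesicmetricspace}; dispatching this case is the crux of the argument. The base case $\ell(v) = 0$ (where $\gd(h) = 1$ and one must show $\gd(gh) \geq w$) is treated by the same auxiliary-edge method.
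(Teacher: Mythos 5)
There is a genuine gap, and it sits exactly where you flag it. Your induction on $\ell(\gd(h))$ is fine up to the configuration $ys<y$, $as>a$, $\gd(gh)=ys$, but the tools you propose cannot dispatch it. The (WG3)-edge $f$ with $\gd(f)=ys$ and $\gd(f^{-1}(gh'))=s$ does give $\gd(f^{-1}(gh))\in\{1,s\}$, and the value $s$ is indeed excluded (by (WG2$'$), since $(ys)s=y>ys$, it would force $\gd(gh)=y$, contradicting $\gd(gh)=ys$); but the value $1$ cannot be excluded. In a non-strict $W$-groupoid an edge of $W$-length $1$ need not be an identity, and the configuration $\gd(f)=ys$, $\gd(f^{-1}(gh))=1$, $\gd(gh)=ys$ is self-consistent rather than contradictory --- indeed, once (WG2) is known, \autoref{prop:doublecoset} \emph{forces} $\gd(gh)=\gd(f)=ys$ in that situation, so your auxiliary edge merely confirms the bad case instead of refuting it, and the asserted uniqueness of geodesics gives no purchase either. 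The root cause is that your inductive hypothesis $a=\gd(g)\gd(h')\leq\gd(gh')$ carries too little information: right multiplication by $s$ is not monotone for the Bruhat order, so no lifting-property combinatorics can recover $as\leq ys$ from $a\leq y$ alone. Note also that your base case $\gd(h)=1$, i.e. $\gd(gh)\geq\gd(g)$, is not handled by "the same auxiliary-edge method" --- it is precisely \autoref{prop:doublecoset} with (WG2) unavailable, and needs a separate argument.

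The paper's proof avoids the case analysis entirely by inducting along a geodesic of the \emph{product} rather than of $h$: write $gh=g_1\dots g_n$ with reduced type $f$ (using \autoref{geodesicmetricspace}), and peel $g_n, g_{n-1},\dots$ off via (WG2$'$), so that at each step the $W$-length either stays fixed or is multiplied by the corresponding letter of $f$. After $n$ steps, $\gd(g^{-1})=\gd(h)f'^{-1}$ where $f'$ is a subword of $f$, whence (using \autoref{cor:inv}) $\gd(g)\gd(h)=f'$, and the subword characterization of the Bruhat order gives $f'\leq f=\gd(gh)$ immediately. The structural difference is that the paper propagates the statement "$\gd(g)\gd(h)$ is represented by a subword of the fixed reduced word $f$" through the induction, rather than a bare Bruhat inequality --- exactly the finer invariant your argument needs and lacks. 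If you strengthen your inductive hypothesis to that subword statement relative to a fixed geodesic of $gh$, your scheme closes, but it then becomes essentially the paper's proof.
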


\begin{proof}
Let $g,h\in \cG$ be edges such that $gh$ is defined in $\cG$, and let 
\[gh=g_1\dots g_n\] 
be a geodesic of $gh$ with type $f$. Then 
\[\gd(h g_n^{-1}) \in \big\{\gd(h),\gd(h)\gd(g_n) \big \}\] 
by (WG2$'$). By proceeding inductively in $n$, we see that $\gd(g^{-1})=\gd(h)f'^{-1}$, where $f'$ is a substring of $f$. Then $\gd(g)=f' \gd(h)^{-1}$. Thus, $\gd(g)\gd(h)=f'$, and it follows that $\gd(gh)\geq \gd(g)\gd(h)$ in the Bruhat order.   
\end{proof}

By combining \autoref{lem:localtri} with \autoref{lem:wg2implieswg2'}, we see that in the presence of properties (WG1) and (WG3), the triangle inequality and the local triangle inequality are equivalent.

\section{Definition of $W$-Groupoids} \label{section:defofweylmetricgroupoid}

We now define $W$-groupoids.
\begin{definition} Let $W$ be a Coxeter group, and let $\cG$ be a groupoid. A \emph{$W$-groupoid} on $\cG$ is a weak function $\delta:\cG\to W$ which satisfies the following three properties:
\begin{enumerate}[itemindent=0cm, leftmargin=1.9cm]
\item [\textbf{(WG1)}]
For all identity edges $1\in \cG$, we have  $\gd(1)=1$ 
\item [\textbf{(WG2)}]
For all edges $g,h\in \cG$ such that $gh$ is defined in $\cG$, we have 
\[\delta(gh)\geq \delta(g)\delta(h)\]
\item [\textbf{(WG3)}]
For all edges $g\in \cG$ and for each $s\in S$ such that $\gd(g)s<\gd(g)$, there exists an edge $h\in \cG$ with $\iota(h)=\iota(g)$ such that
\[
\gd(h^{-1}g)=s\qquad   \text{and}\qquad \gd(h)=\gd(g)s
.\]
\end{enumerate} 
\end{definition}
\noindent If in addition $\gd(g)=1$ implies that $g$ is an identity edge, then $\delta:\cG\to W$ is called a \emph{strict $W$-groupoid}. We show in \autoref{buildings} that buildings are equivalently connected simply connected strict $W$-groupoids. It follows from weakness that $\delta$ is always surjective. By our previous results, $W$-groupoids satisfy the local triangle inequality, and the property that $\gd(g^{-1})=\gd(g)^{-1}$ for all edges $g\in \cG$. For a chamber $C\in \cG$, the \emph{fundamental group} $\pi_1(\delta,C)$ of $\delta$ at $C$ is the local group of $\cG$ at $C$. The \emph{fundamental groupoid} $\pi(\delta)$ of $\delta$ is just $\cG$.

For $J\subseteq S$, we denote by $\cG_J$ the restriction of $\cG$ to those edges whose $W$-length is an element of the parabolic subgroup $W_J=\la J \ra\leq W$. Then $\cG_J$ is naturally a $W_J$-groupoid. The \emph{Borel subgroupoid} $\cB$ of $\cG$ is $\cG_{\emptyset}$, i.e. it is the subgroupoid of $\cG$ whose set of edges is given by
\[   \cB_1:=\big \{g\in \cG_1:\gd(g)=1\big \}   .   \] 
Thus, a $W$-groupoid is strict if and only if its Borel subgroupoid is a bundle of trivial groups. For $J\subseteq S$, we call a connected component of $\cG_J$ a \emph{$J$-residue}. If $|J|=2$, we say \emph{$2$-residue}. If $J=\{s\}$ for a generator $s\in S$, then we call $\cG_J$ the \emph{panel groupoid} of type $s$. Notice that to determine the function $\delta$ of a $W$-groupoid $\cG$, it is sufficient to know the panel groupoids $\cG_s\leq \cG$ since geodesics will then tell us the value of $\delta$ on all other edges (in the same way that chamber systems determine buildings).    

\section{$W$-Groupoids and Buildings} \label{buildings} We now demonstrate the connection between $W$-groupoids and buildings. For our definition of a building, we take the symmetrical version of the axioms which appear in \cite[p.~218]{ab08}, which are equivalent by \cite[Remark 5.18]{ab08}. 

\begin{definition}
Let $W=(W,S)$ be a Coxeter group. A \emph{building} $(\Delta,\gd)$ of type $W$ is a set of \emph{chambers} $\Delta$ equipped with a function $\gd:\Delta\times \Delta\to W$ such that for all $C,D\in \Delta$, the following three conditions hold:
\begin{enumerate}  [itemindent=0cm, leftmargin=1.9cm]
\item [\textbf{(WD1)}]
$C=D$ if and only if $\delta(C,D)=1$  
\item [\textbf{(WD2)}]
if $\delta(C,D)=w$ and $D'\in \Delta$ satisfies $\delta(D,D')=s\in S$, then: 
\begin{enumerate}[label=(\roman*)]
\item
if $ws<w$, then $\delta(C,D')\in \{ws,w\}$
\item
 if $ws>w$, then $\delta(C,D')=ws$ 
\end{enumerate}
\item [\textbf{(WD3)}] 
if $\delta(C,D)=w$, then for all $s\in S$ there exists a chamber $D'\in \Delta$ such that 
\[
\delta(D,D')=s \ \ \ \  \text{and} \ \ \ \ \delta(C,D')=ws
.\]
\end{enumerate}
\end{definition}
We now show that a building is equivalently a strict $W$-groupoid on a connected simply connected groupoid. Let $(\Delta,\gd)$ be a building, and let $\cG$ be the connected simply connected groupoid with $\cG_0=\Delta$. Define a function
\[    \gd':\cG\rightarrow W,\qquad   g\mapsto \gd( \iota(g),\tau(g)      )  .\]
\noindent Then $\gd':\cG\rightarrow W$ has property (WG$1$) and is strict because $(\Delta,\gd)$ has property (WD$1$). Weakness follows from (WD$3$), as does property (WG$3$). Also, $\gd':\cG\rightarrow W$ satisfies the local triangle inequality because $(\Delta,\gd)$ satisfies (WD$2$), and so $\gd':\cG\rightarrow W$ satisfies property (WG2) by \autoref{lem:localtri}. Therefore $\gd':\cG\rightarrow W$ is a strict $W$-groupoid. Conversely, let $\gd:\cG\rightarrow W$ be a strict $W$-groupoid on a connected simply connected groupoid $\cG$. Put $\Delta=\cG_0$, and define a map
\[ \gd':\Delta\times \Delta\to W,\qquad  (C,D)\mapsto \gd(g)  \]
where $g$ is the unique edge traveling from $C$ to $D$. Then $(\Delta,\gd')$ has property (WD$1$) because $\gd:\cG\rightarrow W$ is strict and has property (WG$1$). Also, $(\Delta,\gd')$ satisfies (WD$2$) because $\gd:\cG\rightarrow W$ satisfies the local triangle inequality. Property (WD$3$) follows from weakness in the case where $ws> w$, and (WG$3$) in the case where $ws<w$. 

\section{$W$-Groupoids and Bruhat Decompositions} \label{bruhat} 

We now consider the case where $\cG$ has only one chamber. Such $W$-groupoids should correspond to chamber-transitive actions of groups on buildings. In this case, $\cG$ is naturally a group, and we put
\[G:=\cG_1\qquad  \text{and}\qquad B:=\cB_1.\] 
It follows from \autoref{prop:doublecoset} and \autoref{cor:inv} that $\delta$ factors through the map 
\[G\twoheadrightarrow  B \backslash G/ B,\qquad   g\mapsto B g B\] 
to give a function
\[\delta_{B}: B \backslash G/B\to W.\] 
Let $\delta:G\to W$ be a $W$-groupoid with one chamber such that $\delta_{B}$ is injective. Define the function
\[\cC:W\to B \backslash G/ B, \qquad \cC:=\delta^{-1}_{B}  .\] 
Then the local triangle inequality (WG2$'$) becomes
\begin{enumerate}  [itemindent=0cm, leftmargin=1.9cm]
\item [\textbf{(B$'$)}]
For all $w\in W$ and all $s\in S$, we have:
\begin{enumerate}[label=(\roman*)]
	\item
if $ws<w$, then $\cC(w)\cC(s)\subseteq \cC(ws)\cup \cC(w)$
	\item
if $ws>w$, then $\cC(w)\cC(s)\subseteq \cC(ws)$.
\end{enumerate}
\end{enumerate}  
And (WG3) becomes
\begin{enumerate}  [itemindent=0cm, leftmargin=1.9cm]
	\item [\textbf{(B$''$)}]
	For all $w\in W$ and $s\in S$ such that $ws<w$, we have
\[\cC(w)\subseteq \cC(ws)\cC(s).\]
\end{enumerate}  
If we substitute $w'=ws$, we obtain the following alternate form of (B$''$); for all $w'\in W$ and $s\in S$ such that $w's>w'$, we have
\[\cC(w's)\subseteq \cC(w')\cC(s).\]  
The following proposition shows that this inclusion holds in general.
\begin{prop} \label{without0}
Let $\delta:G\to W$ be a $W$-groupoid with one chamber such that $\delta_{B}$ is a bijection. Let $w\in W$ and $s\in S$ such that $ws<w$, then
\[\cC(ws)\subseteq \cC(w)\cC(s).\]	
\end{prop}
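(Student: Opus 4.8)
The plan is to prove the inclusion pointwise: I will show that every $k\in C(ws)$ lies in the product set $C(w)C(s)$. Fix such a $k$, so $\gd(k)=ws$. Since $\cG$ has a single chamber, the weakness of $\gd$ (applied to that chamber) supplies an edge $x\in G$ with $\gd(x)=s$. Being a $W$-groupoid, $\cG$ also satisfies the local triangle inequality (WG2$'$) and the identity $\gd(x^{-1})=\gd(x)^{-1}$, both of which I will use.

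The key step is to compute $\gd(kx)$. Writing $k^{-1}(kx)=x$, I apply (WG2$'$) to the pair $k,\ kx$: here $\gd\big(k^{-1}(kx)\big)=\gd(x)=s\in S$, and putting $w'\mathrel{:=}\gd(k)=ws$ we have $w's=(ws)s=w$. The hypothesis $ws<w$ says exactly that $w'<w's$, so we land in case (ii) of (WG2$'$), which forces $\gd(kx)=w's=w$; hence $kx\in C(w)$. To finish, note $\gd(x^{-1})=\gd(x)^{-1}=s^{-1}=s$, so $x^{-1}\in C(s)$, and the factorization $k=(kx)x^{-1}$ exhibits $k\in C(w)C(s)$. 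As $k\in C(ws)$ was arbitrary, this gives $C(ws)\subseteq C(w)C(s)$.

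I expect the only delicate point to be the bookkeeping in applying (WG2$'$): one must match $\gd(k)=ws$ (not $w$) to the letter $w'$ of the local triangle inequality and verify that the hypothesis $ws<w$ becomes the \emph{ascending} case $w's>w'$, since it is precisely this ascent that yields the equality $\gd(kx)=w$ rather than a mere two-element inclusion. This is also what distinguishes the statement from (B$''$): whereas (B$''$) records the descending instances $C(w)\subseteq C(ws)C(s)$, the present argument shows that multiplying any element of $C(ws)$ by \emph{any} edge of $W$-length $s$ is forced to ascend into $C(w)$, with weakness guaranteeing that such an edge exists. Combined with the alternate form of (B$''$), this completes the verification that $C(us)\subseteq C(u)C(s)$ holds for all $u\in W$ and $s\in S$.
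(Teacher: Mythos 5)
Your proof is correct and follows essentially the same route as the paper: both substitute $w'=ws$, invoke the ascending case of the local triangle inequality to conclude that multiplying an element of $C(ws)$ by an edge of $W$-length $s$ lands in $C(w)$, and then finish via the inverse-closedness of $C(s)$ from \autoref{lem:inverses2}. The only difference is cosmetic---you apply (WG2$'$) pointwise to $k$ and $kx$, whereas the paper states the same step at the double-coset level as the inclusion $C(ws)C(s)\subseteq C(w)$ via (B$'$), which is just (WG2$'$) rewritten.
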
  
\begin{proof} 
	Put $w'=ws$, then $w's>w'$ and so $\cC(w')\cC(s)\subseteq \cC(w's)$ by (B$'$). Then $\cC(ws)\cC(s)\subseteq \cC(w)$, and so $\cC(ws)\subseteq \cC(w)\cC(s)$ since $\cC(s)$ is closed under inverses by \autoref{lem:inverses2}.
\end{proof}
Then, combining (B$'$), (B$''$), and \autoref{without0}, we obtain
\begin{enumerate}  [itemindent=0cm, leftmargin=1.9cm]
	\item [\textbf{(B)}]
	For all $w\in W$ and all $s\in S$, we have:
	\begin{enumerate}[label=(\roman*)]
		\item
		if $ws<w$, then $\cC(ws)\subseteq \cC(w)\cC(s)\subseteq \cC(ws)\cup \cC(w)$
		\item
		if $ws>w$, then $\cC(w)\cC(s)= \cC(ws)$.
	\end{enumerate}
\end{enumerate}
This is the property required for a bijective function $\delta_{B}: B \backslash G/B\to W$ to be a Bruhat decomposition \cite[Section 6.2.1]{ab08}. Conversely, let $G$ be a group with a subgroup $B\leq G$, and suppose that we have a bijective function $\delta_{B}: B \backslash G/B\to W$ with property (B). Let $\delta:G\to W$ be the composition of $\delta_{B}$ with the projection $G\to B \backslash G/ B$. Then $\delta$ has properties (B$'$) and (B$''$), and so has properties (WG2$'$) and (WG3). Finally, $\delta$ is clearly weak, and has property (WG1) since if $\cC(w)=B$, then for any $s\in S$ we have
\[    \cC(ws)\subseteq \cC(s)\cC(w) =  \cC(s)   .\]
Therefore $ws=s$ and $w=1$.

\section{Future Work}

We now discus some future developments in the theory of $W$-groupoids, in particular their connection with chamber systems of type $M$, their covering theory, and their `presentations'. 

A \emph{chamber system} is an indexed collection of equivalence relations on a set, and a \emph{chamber system of type $M$} is a chamber system whose $2$-residues are buildings \cite{tits81local}. The relationship between chamber systems of type $M$ and $W$-groupoids is described in \cite{nor2}, where we show that strict $W$-groupoids whose $2$-residues are simply connected are equivalent to the chamber systems of type $M$ which are covered by buildings. A $W$-groupoid is obtained from a chamber system of type $M$ by taking the groupoid of homotopy classes of galleries. Conversely, the panel groupoids of a strict $W$-groupoid whose $2$-residues are simply connected will also be simply connected, and one recovers the associated chamber system of type $M$ by taking for equivalence relations the panel groupoids.

\medskip

In \cite{nor2}, we introduce `presentations' of strict $W$-groupoids, which we call \emph{Weyl graphs}. Roughly speaking, Weyl graphs are to strict $W$-groupoids what chambers systems are to buildings. A Weyl graph consists of an indexed collection of `generating' panel groupoids together with a collection of `relations' which we call \emph{suites}. Suites are the images of apartments of the $2$-residues of the building which covers the Weyl graph. 

Weyl graphs generalize chamber systems of type $M$ by allowing $2$-residues to be quotients of generalized polygons; they are the quotients of actions which are free on chambers, but not necessarily free on the set of $2$-residues. For example, \autoref{fig:groupoid} shows the Weyl graph of the Fano plane and of its quotient by a Singer cycle (all of the groupoids involved are equivalence relations). The galleries in the quotient which lift to apartments of the Fano plane are the suites of the quotient. We develop covering theory of strict Weyl groupoids in the language of Weyl graphs in \cite[Section 4]{nor2}, which reduces to Tits's covering theory of chamber systems of type $M$ if one assumes that coverings are injective on $2$-residues. 

Weyl graphs provide a framework in which quotients of generalized polygons by groups acting freely on chambers (flags) can be glued together to form quotients of buildings. For example, Essert's Singer lattices in \cite{essert2013geometric} of type $\wt{A}_2$ are constructed by gluing together three copies of the quotient of a projective plane by a Singer group. Using Weyl graphs, this construction is easily generalized to type $M$ in the case $m_{st}\in \{2,3,\infty\}$ for all $s,t\in S$ \cite{nor3}. 

\begin{figure}[t]
	\centering
	\includegraphics[scale=0.9]{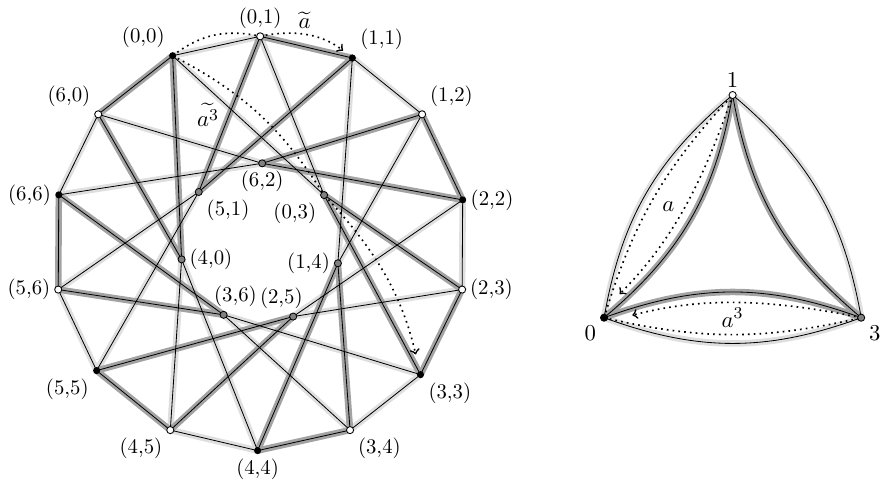}
	\caption{The Fano plane (left) and its quotient by a Singer cycle (right), constructed using the difference set $\{0,1,3\}$. All groupoids involved are equivalence relations (setoids).}
	\label{fig:groupoid}
\end{figure}

\medskip

We can obtain (stacky) covering theory of $W$-groupoids directly from covering theory of groupoids. See e.g. \cite{brown06topology} for details on coverings of groupoids. The key step is to somehow allow the Borel subgroupoid to account for non-trivial isotropy. We outline the main ideas here, and leave the detailed exposition to future work. 

A \emph{covering} of $W$-groupoids $p:\cG\to \cG'$ is a surjective groupoid homomorphism which preserves $W$-length such that for all chambers $C\in \cG$, the restriction of $p$ to the edges which issue from $C$ is a bijection into the edges which issue from $p(C)$. Naturally associated to the \emph{free} action of a group $G$ on a $W$-groupoid $\cG$ is the quotient $\cG\to G \backslash \cG$, which is a covering map of $W$-groupoids. Conversely, the fundamental group of a \emph{strict} $W$-groupoid $\cG$ acts freely on the building which covers $\cG$. 

If the action of a group on a building is not free, one can move to a free action by replacing each chamber $C$ by $\text{stab}(C)$-many chambers, all at distance $1\in W$. One then takes the ordinary quotient, which will have a non-trivial Borel subgroupoid which is a bundle of groups. Conversely, the fundamental group of a (possibly non-strict) $W$-groupoid acts freely on its universal cover, which develops non-trivial isotropy when one moves to a building by identifying chambers at distance $1\in W$. 

One should be able to define `presentations' of $W$-groupoids in general, which might be called \emph{Weyl graphs of groups}, and develop covering theory in this language. Tits's amalgams are Weyl graphs of groups with a single chamber since the associated $W$-groupoid is generated from a collection of panel groups, whose suites are determined by the amalgamation data.

\bibliographystyle{alpha}
\bibliography{sample}


\end{document}